\documentclass{amsart}
\usepackage{graphics,verbatim,color,amsthm}

\usepackage[all]{xy}

\theoremstyle{plain}
\newtheorem{theorem}{Theorem}

\newtheorem{lemma}{Lemma}

\theoremstyle{remark}

\theoremstyle{definition}

\def\metrictwo#1{\langle\!\langle #1 \rangle\!\rangle}

\def\normtwo#1{\| #1 \|}

\def\R{\mathbb{R}}

\def\a{\alpha}

\def\g{\gamma}

\def\r{\rho}

\def\w{\omega}

\def\into{\rightarrow}

\DeclareMathOperator{\diag}{diag}

\def\smallskip{\par\vspace{1mm}}
\def\medskip{\par\vspace{2mm}}
\def\bigskip{\par\vspace{3mm}}

\def\fr#1#2{\frac{#1}{#2}}

\def\m#1{\begin{bmatrix}#1\end{bmatrix}}

\newcount\equationcount
\def\thenumber{0}
\def\eq#1{\global\advance\equationcount by 1
   \def\thenumber{\number\equationcount}
                        {$$#1\eqno(\thenumber)$$}}

\begin{document}

\author[R. Moeckel]{Richard Moeckel}
\address{School of Mathematics\\ University of Minnesota\\ Minneapolis MN 55455}
\email{rick@math.umn.edu}

\title[Large potential orbits]{Orbits of the three-body problem\\ with large potential}
\date{March  9, 2026}
\maketitle
Consider the planar three-body problem with masses positive $m_1,m_2,m_3$ position vector  $q(t) = (q_1(t),q_2(t),q_3(t))\in\R^6$.  Let 
$$U(q) = \frac{m_1m_2}{r_{12}}+\frac{m_1m_3}{r_{13}}+\frac{m_2m_3}{r_{23}}$$
where $r_{ij}=|q_i-q_j|$.  Assume that the angular momentum $\w$ is nonzero so that triple collision is impossible.

The goal of this note is to prove the following theorem.
\begin{theorem}\label{th_UK}
Given any constant $K>0$ there are solutions of the negative-energy, planar three-body problem with $U(q(t))\ge K$ for all $t\in\R$.  
\end{theorem}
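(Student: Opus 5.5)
The plan is to use the simplest explicit solutions of the three-body problem, the rigidly rotating relative equilibria, together with the scaling symmetry of the Newtonian potential. The statement fixes only $K$: it does not prescribe the energy $h$ or the angular momentum $\w$ beyond $h<0$ and $\w\neq 0$. So it is enough to exhibit one family of negative-energy, nonzero-angular-momentum solutions whose potential is constant in time and can be made as large as we like.

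\emph{Step 1.} Take any central configuration $x=(x_1,x_2,x_3)$, for instance the Lagrange equilateral triangle with center of mass at the origin. It satisfies $\nabla U(x)=-\lambda M x$, where $M=\diag(m_1,m_1,m_2,m_2,m_3,m_3)$ and $\lambda=U(x)/I(x)>0$ with $I(x)=x^TMx$. Setting $q(t)=R(\sqrt{\lambda}\,t)x$, with $R(\theta)$ the simultaneous planar rotation by angle $\theta$, gives a solution defined for all $t\in\R$. Along it the mutual distances are constant, so $U(q(t))\equiv U(x)$.

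\emph{Step 2.} Compute the conserved quantities. The kinetic energy is $\tfrac12\lambda I(x)=\tfrac12 U(x)$, so $h=\tfrac12U(x)-U(x)=-\tfrac12U(x)<0$. The angular momentum is $\w=\sqrt{\lambda}\,I(x)=\sqrt{U(x)I(x)}\neq0$. Hence the hypotheses of the theorem hold, and in particular triple collision is excluded.

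\emph{Step 3.} Scale the configuration. Replacing $x$ by $sx$ with $s>0$ still gives a central configuration, and $U(sx)=U(x)/s$. Choosing $s<U(x)/K$ produces a solution with $U(q(t))=U(x)/s>K$ for every $t$. This proves the theorem as stated.

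The only place I expect real difficulty is if the intended meaning is stronger, namely that such orbits exist for \emph{prescribed} $h<0$ and $\w\ne0$. The scale-invariant quantity is $h\w^2$, and relative equilibria realize only finitely many of its values. In that case I would instead use hierarchical, lunar-type solutions: a tight binary $m_1,m_2$ with separation at most $\epsilon$, and $m_3$ on a wide near-Kepler orbit around the binary's center of mass. Almost all the (negative) energy and arbitrary angular momentum can be placed in the outer orbit, and $U\ge m_1m_2/\epsilon$. To obtain such solutions for all $t\in\R$ I would apply KAM theory to the averaged secular system (as in the Jefferys--Moser or Féjoz results). The main obstacle there is verifying nondegeneracy and proving all-time confinement of the binary separation; KAM tori in the four-dimensional reduced energy level bound this separation, which gives $U\ge K$ for all time.
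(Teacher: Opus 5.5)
Your Steps 1--3 are computed correctly, but they prove only a reading of the theorem in which the energy may grow with $K$, and that reading is empty. Every negative-energy solution satisfies $U=T+|h|\ge |h|$, so you could rescale \emph{any} globally defined negative-energy orbit until $|h|\ge K$. The ratio $U/|h|$ is invariant under the scaling symmetry, and it, not $h\w^2$, is the quantity that matters. The content of the theorem is that $U\ge K$ holds at a \emph{fixed} energy $h<0$ (and fixed $\w\ne 0$), with $K$ arbitrarily large compared to $|h|$. The paper's proof is set up this way: $h$ and $\w$ are fixed before $r_0$ is chosen. The motivating remark, that such orbits stay on the ``hot'' side $U>2|h|$ of the virial surface, says the same thing. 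Relative equilibria have $U\equiv 2|h|$, so they lie exactly on the virial surface and give nothing once $K>2|h|$.

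Your fallback cannot repair this, and the problem is not only the unverified KAM step. By the Lagrange--Jacobi identity $\ddot I=4T-2U=2U+4h$, any solution with $U\ge K>2|h|$ for all $t$ has $\ddot I\ge 2K-4|h|>0$, so $I(t)\to\infty$ as $t\to\pm\infty$. Hence no bounded motion, and in particular no quasi-periodic lunar orbit on a KAM torus, can qualify. Your energy bookkeeping is also backwards. A binary whose separation stays below $\epsilon$ carries Kepler energy at most roughly $-m_1m_2/(2\epsilon)$, so at fixed $h$ the outer motion of $m_3$ has large \emph{positive} energy and $m_3$ must escape.

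The missing idea is a scattering construction, which is what the paper does:
\begin{itemize}
\item Start at a minimum $r=r_0$ of the size, with $v=0$ and $r_0$ small.
\item In McGehee variables, use the Sundman--Birkhoff function $F=v^2-2rh+\w^2/r$, which is nondecreasing while $v\ge 0$. This gives $r$ increasing, $V\ge F_0/2$ large, and $v$ large.
\item Once the configuration is a tight binary with $\r$ and $\dot\r$ large, the comparison $\ddot\r\ge -4(m_1+m_2)m_3/\r^2$ keeps $\dot\r$ large forever. Then $U\ge T\ge \fr12\mu_2\dot\r^2\ge K$.
\item Time reversal handles $t\le 0$.
\item The initial conditions form an open set in the regularized problem, so collision-free orbits can be selected from it.
\end{itemize}
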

These solutions will have a single close approach to triple collision.  The configuration will always be a tight binary with $m_1, m_2$ close and the distance from the binary to $m_3$ diverging as $t\into\pm\infty$.  

Motivation for this result came from a question posed in a preprint of Richard Montgomery entitled ``Halfway between heaven and hell''  \cite{Montgomery}.  In that paper, heaven is the zero velocity surface in configuration space where the kinetic energy is zero and the potential energy $U(q)$  is as small as possible for a given energy.  Hell is the singular set where $U(q)$ is infinite.  In between is the virial surface where the relative equilibrium solutions live.  From that perspective, Theorem~\ref{th_UK} shows that there are solutions which stay for all time on the ``hot'' side of the virial surface.  

For most of the paper, we  suppose that the binary collisions are regularized.  Then all of the regularized solutions will exist for all $t\in\R$.  
The proof will show that the set of initial  conditions of the regularized problem leading to solutions of this type has nonempty interior.   Since  the  set  of initial conditions leading to solutions having binary collisions has measure zero and is of Baire first category, it follows that there will still be a nonempty set of such solutions  with no  binary collisions.  So the theorem also holds for nonregularized solutions.

The construction is a refinement of  Birkhoff's arguments in his Dynamical Systems book, \cite{Birkhoff}, Chapter IX.8.  Birkhoff's goal was to prove a result of Sundman which states that for every solution of the  three-body problem with nonzero angular momentum, the size of the triangle formed by the bodies has a positive lower bound.  Triple collision itself is not possible and  Birkhoff's  proof shows that any solution which approaches sufficiently close to triple collision is of the type just described.  In particular, it can have only one very close approach to collision and then is scattered to infinity.  Intuitively, such a tight binary configuration should have a large potential but Birkhoff does not try to get an estimate.  Here, in addition to estimating the potential, we also take the opportunity to study the near approach to triple collision using  McGehee coordinates.

We begin by  eliminating the center of mass using Jacobi coordinates $x=(x_1,x_2)\in\R^4$
$$x_1=q_2-q_1\qquad  x_2=q_3-\nu_1 q_1-\nu_2q_2.$$
Then the Lagrangian is $L=T(\dot x)+U(x)$ where
$$\begin{aligned}
T &= \fr12\mu_1|\dot x_1|^2+\fr12\mu_2|\dot x_2|^2\\
U(x) &= \fr{m_1m_2}{r_{12}}+\fr{m_1m_3}{r_{13}} + \fr{m_2m_3}{r_{23}} \\
r_{12}&=|x_1|\quad r_{13}= |x_2+\nu_2 x_1|\quad r_{23} =|x_2-\nu_1x_1|
\end{aligned}
$$
where the mass constants are
$$\mu_1= \fr{m_1m_2}{m_1+m_2} \quad \mu_2= \fr{(m_1+m_2)m_3}{m_1+m_2+m_3}\quad \nu_i=\fr{m_i}{m_1+m_2}.$$
The Euler-Lagrange differential equations are
\begin{equation}
\begin{aligned}\label{eq_odeJacobi}
\mu_1 \ddot x_1&= U_{x_1}\\
\mu_2 \ddot x_2&= U_{x_2}\\
\end{aligned}
\end{equation}
Solutions of the Euler-Lagrange equations preserve the energy $T-U = h<0$.
The angular momentum $\w=\mu_1 x_1\wedge \dot x_1 +\mu_2 x_2\wedge\dot x_2$ is also a constant of motion.

Introduce a {\em mass norm} and {\em mass metric} in $\R^4$ by
$$\normtwo{v}= \mu_1|v_1|^2+\nu_2|v_2|^2\qquad\metrictwo{v,w}= \mu_1 v_1\cdot w_1+\mu_2 v_2\cdot w_2$$
where $v_i, w_i\in\R^2$.  If $M$ is the $4\times 4$ mass matrix $M=\diag(\mu_1,\mu_1,\mu_2,\mu_2)$ then $\metrictwo{v,w}=v^TMw$.
Using these definitions, the kinetic energy is $T=\fr12\normtwo{\dot x}^2$ and
$$I = r^2=\normtwo{x}^2  =\mu_1|x|^2+\mu_2|y|^2$$
 is the moment of inertia.   Finally, if $J$ is the $4\times 4$ block-diagonal matrix with blocks $\m{0&-1\\1&0}$, the angular momentum is
 $$\w =\metrictwo{x,J\dot x}.$$

\section{Near Triple Collision}
Next introduce McGehee coordinates to blow up the triple collision.
With $r=\sqrt{I}$ as above, define a normalized configuration variable $s= x/r$ and a rescaled velocity variable 
$z=\sqrt{r}\, \dot x$.  In addition, introduce a new time variable such that $dt=r^\fr32 \,d\tau$.  Then the blown-up differential equations are
\begin{equation}\label{eq_odeblowup}
\begin{aligned}
r' &= v\,r\\
s'&= z-vs \\
z'&=M^{-1}\nabla V(s)+\fr12 v z.
\end{aligned}
\end{equation}
where $v=\metrictwo{s,z}$.  
The energy equation becomes
\begin{equation}\label{eq_energyblowup}
\fr12\normtwo{z}^2-V(s) = rh.
\end{equation}
The shape potential $V(s)$ is the same as $U$ with $x_i$ replaces by $s_i=x_i/r$.  By homogeneity we have
$$U(x)=\fr1r V(s).$$

The variable $v = \metrictwo{s,z}$ will play an important role.  Since $r'=vr$ we have the size variable $r(\tau)$ increasing when $v>0$ and decreasing when $v<0$.
From (\ref{eq_odeblowup}) and (\ref{eq_energyblowup}) we get
\begin{equation}\label{eq_vprime}
v' =\fr12\normtwo{z^2} -\fr12v^2+rh.
\end{equation}
Next we develop some inequalities involving the angular momentum.   From the blow-up of coordinates we have
$$r^{-\fr12} \w = \mu_1 s_1\wedge z_1 +\mu_2 s_2\wedge z_2 = \metrictwo{s,Jz}.$$
Note that $s, Js$ are orthogonal unit vectors with respect to the mass metric.  The orthogonal projection of $z$ onto the plane they span is
$\metrictwo{z,s}s +\metrictwo{z,Js}Js$.  It follows that
\begin{equation}\label{eq_Testimate}
\normtwo{z}^2\ge \metrictwo{z,s}^2 +\metrictwo{z,Js}^2 = v^2 +\fr{\w^2}{r}.
\end{equation}
Then (\ref{eq_vprime}) gives
\begin{equation}\label{eq_vprimeestimate}
v' \ge\fr{\w^2}{2r}+rh.
\end{equation}
It follows from this that $v(\tau)$ is monotonically increasing near triple collision, more precisely, whenever $r^2<\fr{\w^2}{2|h|}$.

Consider a solution $\g(\tau)$ with initial condition $r(0)=r_0$ with $r_0^2<\fr{\w^2}{2|h|}$ and $v(0)=0$.  Then since $v'(0)>0$,  $v(\tau)$ changes sign from negative to positive and $r(\tau)$ has a nondegenerate local minimum at $\tau=0$.  The goal will be to show that if $r_0>0$ sufficiently small, then along $\g(\tau)$ we have  $r(\tau)\into\infty$ as $\tau\into\pm\infty$ and that
$U(\tau)= \fr{1}{r(\tau)}V(s(\tau))\ge K$ for all time.  It suffices to consider $\tau\into\infty$.

Following Sundman and Birkhoff, we introduce the following useful function (which Birkhoff calls $H$):
$$F = v^2-2rh+\fr{\w^2}r.$$
Then using (\ref{eq_vprimeestimate}) as long as $v\ge 0$ we have
$$F' = 2vv'-2hr'-\fr{\w^2}{r^2}r' \ge 2v(\fr{\w^2}{2r}+rh)-2hrv-\fr{\w^2}{r}v = 0.$$
Thus $F(\tau)$ is non-decreasing along any solution with $v(\tau)\ge 0$.  Since $h<0$, the initial value of $F$ is
$$F_0(r_0) = F(0) = \fr{\w^2}{r_0}+2|h|r_0.$$
Note that by choosing $r_0$ sufficiently small we can make $F_0(r_0)$ arbitrarily large.

The size $r(\tau)$, $\tau>0$, will continue to increase as long as $v(\tau)>0$.  The inequality $F(\tau)\ge F_0$ shows that $v(\tau)$ cannot reach zero as long as
$$\fr{\w^2}{r}+2|h|r \ge \fr{\w^2}{r_0}+2|h|r_0.$$
This inequality simplifies nicely to
\begin{equation}\label{eq_rbound1}
r\ge \fr{\w^2}{2|h|r_0}.
\end{equation}
Thus by choosing $r_0$ sufficiently small we can guarantee that $r(\tau)$ will continue to monotonically increase to arbitrarily large values.

In addition to getting a large $r$ we also want large values for $v$ and $U$.  Using the energy equation and (\ref{eq_Testimate}) we get
$$\normtwo{z}^2=2V(s)+2rh \ge v^2 +\fr{\w^2}{r}$$
which can be rearranged to show
$$2V(s(\tau)) \ge F(\tau).$$
Since $F(\tau)\ge F_0$ we can arrange that $V(s(\tau))$ be arbitrarily large along our orbit.  The function $F$ also allows us to bound $v(\tau)$ using
$$v^2 \ge F_0 - \fr{\w^2}{r}-2|h|r.$$
\begin{lemma}\label{lemma_r1}
Let $\g(\tau)$ be a solution with  $r(0)=r_0$ with $r_0^2<\fr{\w^2}{2|h|}$ and $v_0=0$ and choose any $r_1<\fr{\w^2}{2|h|r_0}$.  Then there is a time interval $[0,\tau_1]$ such that $r(\tau)$ is strictly increasing with $r(\tau_1)=r_1$ and for all $\tau\in [0,\tau_1]$ we have
$$V(\tau)\ge  \fr{\w^2}{2r_0}+|h|r_0\qquad U(\tau)=\fr{1}{r(\tau)}V(\tau) \ge  \fr{\w^2}{2r_0r_1}+|h|\fr{r_0}{r_1}.$$
Moreover
\begin{equation}\label{eq_v1}
v_1^2=v(\tau_1)^2\ge(r_1-r_0)\left[\fr{\w^2}{2r_0r_1}-2|h|\right].
\end{equation}
\end{lemma}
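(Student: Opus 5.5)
We argue by continuation from $\tau=0$, using the monotonicity of $F$ established above. Since $v(0)=0$ and $r_0^2<\fr{\w^2}{2|h|}$, (\ref{eq_vprimeestimate}) gives $v'(0)\ge \fr{\w^2}{2r_0}-|h|r_0>0$, so $v>0$ on some interval $(0,\epsilon)$. Let $\tau^*$ be the supremum of times $\tau>0$ such that $v>0$ on $(0,\tau)$ and $r(\tau)\le r_1$.

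On $[0,\tau^*)$ we have $v\ge0$, so $F(\tau)\ge F_0$. Suppose $v(\tau^*)=0$ with $r(\tau^*)\le r_1<\fr{\w^2}{2|h|r_0}$. Then $F(\tau^*)=\fr{\w^2}{r}+2|h|r$ evaluated at $r=r(\tau^*)$. The function $g(r)=\fr{\w^2}{r}+2|h|r$ is convex and takes the same value at $r_0$ and at $\fr{\w^2}{2|h|r_0}$ (its two roots of $g=F_0$). It is therefore strictly less than $F_0$ for $r$ strictly between them, and $r_0<r(\tau^*)$ because $r$ is increasing. This contradicts $F\ge F_0$, so $v$ stays positive.

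We must also rule out $r$ staying bounded below $r_1$ forever. While $r\le r_1$ we have $v^2\ge F_0-g(r)$, and $F_0-g(r)$ is bounded below by a positive constant once $r$ exceeds $r_0+\delta$. Since $v'>0$ on the whole region $r^2<\fr{\w^2}{2|h|}$, we get $v\ge v(\delta')>0$ after a short time. This handles the initial portion; beyond the threshold $r^2=\fr{\w^2}{2|h|}$ the $F$ bound still keeps $v$ away from $0$. Hence $r'=vr\ge cr$, so $r$ reaches $r_1$ in finite time $\tau_1$. This finite-time step is the only mildly delicate point, and the positivity bound above settles it. Strict monotonicity of $r$ on $[0,\tau_1]$ follows from $v>0$ on $(0,\tau_1]$.

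The estimates follow directly. From $2V(s)\ge F\ge F_0$ we get $V\ge \fr{\w^2}{2r_0}+|h|r_0$, and dividing by $r(\tau)\le r_1$ gives the bound on $U$. For (\ref{eq_v1}),
$$v_1^2\ge F_0-g(r_1)=\w^2\Big(\fr1{r_0}-\fr1{r_1}\Big)-2|h|(r_1-r_0)=(r_1-r_0)\Big[\fr{\w^2}{r_0r_1}-2|h|\Big],$$
which is at least the stated bound $(r_1-r_0)\big[\fr{\w^2}{2r_0r_1}-2|h|\big]$, since $r_1>r_0$ and $\fr{\w^2}{r_0r_1}\ge\fr{\w^2}{2r_0r_1}$. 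Thus (\ref{eq_v1}) holds as stated.
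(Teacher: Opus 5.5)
Your proof is correct and follows the paper's argument: $F$ is non-decreasing while $v\ge 0$, the function $\w^2/r+2|h|r$ stays strictly below $F_0$ for $r_0<r<\w^2/(2|h|r_0)$ so $v$ cannot vanish before $r$ reaches $r_1$, $2V\ge F\ge F_0$ gives the potential bounds, and $F(\tau_1)\ge F_0$ gives the bound on $v_1^2$. You also supply the finite-time argument for actually reaching $r_1$, which the paper leaves implicit, and you correctly note that $F_0-(\w^2/r_1+2|h|r_1)$ equals $(r_1-r_0)\bigl[\w^2/(r_0r_1)-2|h|\bigr]$; the paper writes this as an equality with $\w^2/(2r_0r_1)$, a factor-of-two slip, and its stated bound holds a fortiori since $r_1>r_0$.
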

\begin{proof}
Since $r(0)=r_0$ with $r_0^2<\fr{\w^2}{2|h|}$ and $v_0=0$, $r(0)$ is a local minimum and $r(\tau)$ begins to increase with $v(\tau)>0$.  Since  $r_1<\fr{\w^2}{2|h|r_0}$, $v(\tau)>0$ and $r(\tau)$ increases at least until it reaches $r_1$ at some time $\tau_1$.  Meanwhile $V(\tau)\ge \fr12 F_0$ which is the estimate in the lemma.  Since $r(\tau)$ was increasing, the extra factor $1/r$  in $U(\tau)$ is minimized at $ r_1$.  Finally $F(\tau_1)\ge F_0$ implies
$$v^2(\tau_1) \ge \fr{\w^2}{r_0}+2|h|r_0- \fr{\w^2}{r_1}-2|h|r_1=(r_1-r_0)\left[\fr{\w^2}{2r_0r_1}-2|h|\right]$$
as claimed.
\end{proof}

In order to get all of the required quantities to be large at $\tau_1$ we will choose a value for $r_1$ which is large but not too large.  For example suppose we take
$r_1 = \fr{\w^2}{2\sqrt{r_0}}$.  For $r_0$ sufficiently small, this satisfies the hypotheses of the lemma.  Then we have the estimates
\begin{equation}\label{eq_UVestimate}
V(\tau)\ge \fr{\w^2}{2r_0}\qquad U(\tau)\ge \fr{1}{\sqrt{r_0}}+\fr{2|h|}{\w^2}r_0^\fr32 \ge \fr{1}{\sqrt{r_0}}\qquad \tau\in [0,\tau_1]
\end{equation}
and 
$$v_1^2\ge \fr{1}{r_0}\left( \fr{\w^2}{2}-\w^2|h|\sqrt{r_0} -r_0^\fr32 +2|h|r_0^2  \right).$$
So for $r_0$ sufficiently small we can arrange for $r_1, v_1,V(\tau), U(\tau)$, $\tau\in [0,\tau_1]$ to be as large as we please.

\section{Off to infinity}
In this section we will show that we can continue the near-collision orbits from the last section in such a way that $r(\tau)$ increases monotonically to infinity while $U(x(\tau))$ remains large.  Once again we are guided by Birkhoff's argument.  Let $x=(x_1,x_2)$ be the Jacobi variables.  Then $|x_1|=r_{12}$ and, as in Birkhoff, let $\r=|x_2|$.  We have
$$r^2=\mu_1 r_{12}^2 +\mu_2\r^2.$$
 
In this section we will use the ordinary time variable $t$.   Let $t_1$ be the time corresponding to the final time $\tau_1$ in the last section. Suppose that at time $t_1$, $r_{12}$ is the smallest of the  mutual distances.  Then 
 $$U(t_1)\ge \fr{\a}{r_{12}(t_1)}\qquad r_{12}(t_1)\le \fr{\a}{U(t_1)}$$
 where 
 $$\a=m_1m_2+m_1m_3+m_2m_3.$$
Recall that by choosing $r_0$ sufficiently small we can make $U(\tau_1)$ and $r(t_1)$ arbitrarily large.   It follows that we can make $r_{12}(t_1)$ arbitrarily  small  and therefore
$\r(t_1)$ arbitrarily large.   We will  show below that we can also arrange that $\dot \r(t_1)$ be large.  Then we can use the following lemma  to  complete  the  proof of Theorem~\ref{th_UK}.

\begin{lemma}\label{lemma_Birkhoff}
If  the initial values of $\r(t_1)$ and $\dot \r(t_1)$ are sufficiently large, then $\r(t)$ increases monotonically to infinity  for $t\in[t_1,\infty)$.  Moreover, we  can arrange for the value of  
$\dot \r(t)$ to remain above any given positive constant.
\end{lemma}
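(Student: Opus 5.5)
We prove the lemma with a bootstrap (continuity) argument on the equation of motion for $x_2$. We show that the far body $m_3$ feels only a weak attraction from the binary, so it escapes. We also need the binary to stay tight, with $r_{12}$ bounded by a constant.

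\textbf{Equation for $\r$.} From (\ref{eq_odeJacobi}),
$$\mu_2\ddot x_2 = U_{x_2} = -\fr{m_1m_3(x_2+\nu_2x_1)}{r_{13}^3}-\fr{m_2m_3(x_2-\nu_1x_1)}{r_{23}^3}.$$
As long as $r_{12}\le \r/2$ (say), we have $r_{13},r_{23}\ge \r/2$. Hence $|\ddot x_2|\le C/\r^2$, with $C$ depending only on the masses. Writing $\r\ddot\r = x_2\cdot\ddot x_2+|\dot x_2|^2-\dot\r^2\ge x_2\cdot \ddot x_2$ gives
$$\ddot \r\ge -\fr{C}{\r^2}.$$

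\textbf{Escape while the binary is small.} Suppose that on $[t_1,t)$ we have $\dot\r>0$ and $r_{12}\le\r/2$. Multiply the inequality for $\ddot\r$ by $\dot\r$ and integrate:
$$\dot\r(t)^2\ge \dot\r(t_1)^2-\fr{2C}{\r(t_1)}.$$
If $\dot\r(t_1)^2\ge c^2+2C/\r(t_1)$, then $\dot\r\ge c>0$ persists. Then $\r(t)\ge\r(t_1)+c(t-t_1)\to\infty$, and $\dot\r$ stays above the prescribed constant $c$.

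\textbf{Main obstacle: keeping $r_{12}\le\r/2$.} We must show the binary cannot expand to size comparable to $\r$. I would use the Kepler energy of the binary,
$$E_{12}=\fr12\mu_1|\dot x_1|^2-\fr{m_1m_2}{r_{12}}.$$
By (\ref{eq_odeJacobi}),
$$\dot E_{12}=\dot x_1\cdot\Big(U_{x_1}-\fr{m_1m_2\,\partial}{\partial x_1}\fr{1}{r_{12}}\Big).$$
The perturbing term is the tidal force, of size $O(r_{12}/\r^3)$, and it is multiplied by $|\dot x_1|$. To avoid controlling $|\dot x_1|$, it is better to use the conserved total energy. Write
$$h = E_{12} + \Big(\fr12\mu_2|\dot x_2|^2 - \fr{m_3(m_1+m_2)}{\r}\Big) + W,$$
where $W=O(r_{12}^2/\r^3)$ is the quadrupole remainder.

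\textbf{Closing the bootstrap.} Since $\fr12\mu_2|\dot x_2|^2\ge\fr12\mu_2\dot\r^2\ge\fr12\mu_2c^2$, we get
$$E_{12}\le h-\fr12\mu_2c^2+\fr{m_3(m_1+m_2)}{\r}+|W|.$$
We arrange $\r(t_1)$ large and, under the bootstrap hypothesis $r_{12}\le\r/2$, use $|W|\le C'/\r$. Then $E_{12}\le h/2<0$, which forces
$$r_{12}\le \fr{2m_1m_2}{|h|}.$$
This is far smaller than $\r/2$ because $\r\ge\r(t_1)$ is large. So the bootstrap hypothesis is strictly improved, and a continuity argument shows it holds for all $t\ge t_1$.

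The delicate point is this last step: one must check that the constants close. In particular, the remainder $W$ has to be genuinely small when $r_{12}\le\r/2$ but $r_{12}$ is not yet known to be bounded. The cleanest fix is to run the bootstrap with the stronger hypothesis $r_{12}\le 4m_1m_2/|h|$, which holds at $t_1$ once $r_0$ is small. Under that hypothesis $W=O(1/\r^3)$, and the continuity argument goes through unchanged.
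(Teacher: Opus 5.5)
Your proof is correct. Its outer structure is the same as the paper's: $\ddot\rho\ge -C/\rho^2$ while $r_{12}\le\rho/2$, integrated against $\dot\rho$ as a one-dimensional Kepler energy, inside a continuity argument.

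The difference is in the step you call the main obstacle, keeping the binary tight. The paper needs no energy splitting there. Since $T\ge 0$ and $h<0$, one has $U=T+|h|\ge|h|$ at every time, so the smallest mutual distance is always at most $\alpha/|h|$. By the triangle inequality $r_{i3}\ge\rho-r_{12}$, once $\rho>2\alpha/|h|$ neither $r_{13}$ nor $r_{23}$ can become the smallest distance. So $r_{12}$ stays smallest, giving $r_{12}\le\alpha/|h|\le\rho/2$ for as long as $\rho$ stays increasing.

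You instead split $h=E_{12}+E_{\mathrm{out}}+W$ and show $E_{12}\le h/2$, hence $r_{12}\le 2m_1m_2/|h|$. Both arguments use the same underlying fact: negative total energy forces some pair to be close. The paper's version needs only $U\ge|h|$ and the triangle inequality, and estimates no interaction terms. Yours gives a stronger and more physical conclusion: the $1$--$2$ pair remains a bound Kepler binary with internal energy below $h/2$, with a bound depending only on $m_1m_2$. That is the natural starting point if one wanted finer information about the hierarchical motion.

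The ``delicate point'' in your last paragraph is not actually delicate. Under the crude hypothesis $r_{12}\le\rho/2$, each of the three terms making up $W$ is at most a mass constant times $1/\rho$, by the triangle inequality. So $|W|\le C'/\rho$ without any quadrupole expansion, and since $\rho\ge\rho(t_1)$ the bootstrap closes as you first wrote it. The term $-\frac12\mu_2c^2$ is not even needed.

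You do need the starting condition $r_{12}(t_1)<\rho(t_1)/2$, and it does not follow from $\rho(t_1)$ being large. A tight $1$--$3$ pair with $m_2$ far away has large $\rho$ but $r_{12}>\rho$. It has to come from the context that $r_{12}(t_1)$ is small, just as the paper uses that $r_{12}$ is the smallest distance at $t_1$.
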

\begin{proof}
This proof is essentially that in Birkhoff's book.  Since $U(\tau)\ge |h|$, the minimal distance $r_{12}$ satisfies $r_{12}(t_1)\le \a/|h|$.  The other distances $r_{13},r_{23}$ satisfy
$\r-r_{12} \le r_{i3} \le \r+r_{12}$ by the triangle inequality.  As long as $\r(t)>2\a/|h|\ge 2r_{12}$, $r_{12}(t)$ will remain the smallest distance.  Since $\r=|x_2|$ we have
$\r\ddot\r+ \dot\r^2 = x_2\cdot \ddot x_2 + |\dot x_2|^2$.  Since $\dot\r^2\le |\dot x_2|^2$ we have
$$
\begin{aligned}
\r\ddot \r &\ge x_2\cdot \ddot x_2 = -\fr{m_1m_3 x_2\cdot (x_2+\nu_2x_1)}{r_{13}^3}  -\fr{m_2 m_3 x_2\cdot (x_2-\nu_1 x_1)}{r_{23}^3}\\
&\ge -\fr{m_1m_3 \r}{r_{13}^2} -\fr{m_2 m_3 \r}{r_{23}^2}.
\end{aligned}$$
Since $r_{i3}\ge \r-r_{12}$ and $r_{12}\le \fr12 \r$ and
\begin{equation}\label{eq_rhoddot}
\ddot \r \ge -\fr{4(m_1+m_2)m_3}{\r^2}.
\end{equation}
Now we can compare the behavior of $\r(t)$ with that of a Kepler problem on a line.  Let
$$G = \fr12 \dot\r^2 - \fr{4(m_1+m_2)m_3}{\r}$$
be the Keplerian energy.
From (\ref{eq_rhoddot}) we find that $\dot G(t)\ge 0$ so $G(t)\ge G(t_1)$ for $t\in [t_1,\infty)$.  Once we have shown that we can make $\r(t_1), \dot \r(t_1)$ arbitrarily large, it follows that we can make $G(t_1)$ arbitrarily large.  Therefore $\dot\r(t) \ge \sqrt{2G(t_1)}$ remains arbitrarily large for $t\in [t_1,\infty)$.
\end{proof}

We already know that by choosing $r_0$ sufficiently small, we can make $\r(t_1)$ arbitrarily large.  We will now show that the same is true for $\dot\r(t_1)$.  First note that
\begin{equation}\label{eq_rrdot}
r\dot r = \mu_1 r_{12}\dot r_{12}+\mu_2\r \dot\r.
\end{equation}
The first term here is can be made arbitrarily small by choice of $r_0$.  To see this, first note that  
$$\fr12 \mu_1\dot r_{12}^2 \le \fr12 \mu_1|\dot x_1|^2 \le \fr12\normtwo{\dot x}^2 \le U(x).$$
So
$$\fr12 \mu_1r_{12}\dot r_{12}^2 \le  r_{12}U(x) =  m_1m_2+\fr{m_1m_3r_{12}}{r_{13}}+\fr{m_2m_3r_{12}}{r_{23}}\le  \a$$
where the last  inequality  holds because $r_{12}$ is the smallest distance.  This shows $r_{12}\dot r_{12}^2$ is bounded and since $r_{12}$ can be made arbitrariiy small, 
so can $(r_{12}\dot r_{12})^2$.

Dividing (\ref{eq_rrdot}) by $r$ shows that  
$$\dot r  = \fr{\mu_1 r_{12}\dot r_{12}}{r}+\fr{\mu_2\r}{r} \dot\r.$$  
Since $r'=vr$ we have $\dot r ^2 = \fr{v^2}{r}$.  At $t=t_1$ we have $r_1 = \fr{\w^2}{2\sqrt{r_0}}$ while from (\ref{eq_v1}) we find that $v_1^2$ is of order $1/r_0$.  So the ratio defining $\dot r(t_1)$ can still be made arbitrarily large.    
Since  the first term is small and the ratio $\fr{\mu_2\r}{r}$ is bounded  below, we can make  $\dot \r$ arbitrarily  large, as  claimed.

\begin{proof}[Proof of Theorem~\ref{th_UK}]
Let $K>0$ be given.  Choose any $r_0$ with $r_0^2<\fr{\w^2}{2|h|}$ and  $r_0\le \fr{1}{K^2}$ and let $r_1 = \fr{\w^2}{2\sqrt{r_0}}$.  Applying Lemma~\ref{lemma_r1} gives a solution with $r(0)=r_0$, $v(0)=0$ and a time interval $[0,\tau_1]$ over which $r(\tau)$ increases monotonically to $r_1$.  By (\ref{eq_UVestimate}), the potential satisfies 
$$U(\tau)\ge \fr{1}{\sqrt{r_0}}\ge K\qquad \tau\in[0,\tau_1].$$

Switching to the time variable $t$ we use the results of this section to continue the solution over $[t_1,\infty)$.  Taking $r_0$ smaller if necessary, we can arrange that $\r(t_1)$ and $\dot\r(t_1)$ are arbitrarily large.   Lemma~\ref{lemma_Birkhoff} shows that we can continue our solution in such a way that $\r(t)$ monotonically increases to infinity and such that
$\dot \r(t)\ge \sqrt{2K/\mu_2}$ for all  $t\in [t_1,\infty)$.  It follows that 
$$U(t) = T + |h| \ge T\ge \fr12 \mu_2|\dot x_2(t)|^2 \ge \fr 12 \mu_2\dot\r(t)^2\ge K.$$

Thus every orbit with $r(0)=r_0$, $v(0)=0$ and $r_0$ sufficiently small satisfies $U(t)\ge K$ for $t\ge 0$.  Reversing time shows that the same thing is true for $t\le 0$.  Therefore if $r_0$ is sufficiently small, we actually have $U(t)\ge K$ for all $t\in\R$ as required.  Moreover, these  orbits represent  an open set  of initial conditions for the regularized problem.

Now the set of initial conditions leading to binary collision is of measure zero and Baire first category.   This is well-known and can be seen as follows.  Because of the regularization, there is a neighborhood of the binary collision set such that the set of initial conditions leading to binary collision without leaving the neighborhood forms a submanifold of codimension one, which is clearly nowhere dense and of measure zero.  Using the regularized flow to extend this submanifold forward and backward in time, we can express the set of all initial conditions leading to binary collision as a countable union of such sets.

Thus within the open set of regularized initial conditions whose orbits satisfy Theorem~\ref{th_UK}, there will be many whose orbits are collision-free. 
\end{proof}

\bibliographystyle{amsplain}

\begin{thebibliography}{20}

\bibitem{Birkhoff} G.D. Birkhoff,  {\sl Dynamical Systems}, American Mathematical Society Colloquium Publications, vol. 
\textbf{9}, 1927.  

\bibitem{Montgomery} R. Montgomery,  {\sl Halfway between heaven and hell}, preprint  2026.

\bibitem{Sundman} K.F. Sundman,  {\sl Memoire sur le probl\`eme des trois corps}, Acta math. 36, 105--179, 1913.


\end{thebibliography}

\end{document}